\newtheorem{satz}{Satz}
\newtheorem{teo}[satz]{Theorem}
\newtheorem{cor}[satz]{Corollary}
\newtheorem{conj}[satz]{Conjecture}
\newtheorem{prop}[satz]{Proposition}
\newtheorem{lemma}[satz]{Lemma}
\theoremstyle{definition}
\newtheorem{de}[satz]{Definition}
\newtheorem{rem}[satz]{Remark}
\newtheorem{notation}[satz]{Notation}
\newtheorem{example}[satz]{Example}
\newtheorem{say}[satz]{}
\DeclareMathOperator{\Cayley}{Cayley}
\DeclareMathOperator{\codeg}{codeg}
\DeclareMathOperator{\Pic}{Pic}
\DeclareMathOperator{\Div}{Div}
\DeclareMathOperator{\Conv}{Conv}
\DeclareMathOperator{\Cone}{Cone}
\DeclareMathOperator{\Proj}{Proj}
\DeclareMathOperator{\Sym}{Sym}
\newcommand\Q{\mathbb Q}
\newcommand\p{\mathbb P}
\newcommand\R{\mathbb R}
\newcommand\Z{\mathbb Z}
\newcommand{\nef}{{Nef}}
\newcommand{\NE}{{\overline{NE}}}
\newcommand{\eff}{\overline{Eff}}
\begin{document}

\title{On smooth lattice polytopes with small degree}

\author{Carolina Araujo}
\address{Carolina Araujo: IMPA, Estrada Dona Castorina 110, Rio de Janeiro, 22460-320, Brazil}
\email{caraujo@impa.br}

\author{Douglas Mons\^ores}
\address{Douglas Mons\^ores:
   Departamento de Matem\'atica,  Universidade Federal Rural do Rio de Janeiro, Estrada Rio-S\~ao Paulo Km 7, Serop\'edica, Brazil
}
\email{monsores@ufrrj.br}

\subjclass[2010]{14M25, 14E30}

\date{}

\begin{abstract} Toric geometry provides a bridge between the theory of polytopes and algebraic geometry: one can associate to each lattice polytope  a polarized toric variety. 
In this paper we explore this correspondence to classify smooth lattice polytopes having small degree,
extending a classification provided by Dickenstein, Di Rocco and Piene.
Our approach consists in interpreting the degree of a polytope as 
a geometric invariant of the corresponding polarized variety,
and then applying techniques from Adjunction Theory and Mori Theory. 
\end{abstract}

\maketitle

\tableofcontents

%
%

\section{Introduction}

The \emph{degree} of an  $n$-dimensional lattice polytope
$P \subset \mathbb{R}^n$ is the smallest non-negative integer $d$ such that $kP$ contains no 
interior lattice points for $1\leq k\leq n-d$. 
The degree $d$ of $P$ is related to the \emph{Ehrhart series} of $P$ as follows.
For each positive integer $m$, let $f_P(m)$ denote the number of lattice points in $mP$, 
and consider the  Ehrhart series 
$$
F_P(t) \ := \ \sum_{m\geq 1}f_P(m)t^m.
$$
It turns out that $h_P^*(t) := \frac{F_P(t)}{(1-t)^{n+1}}$ is a polynomial of degree $d$ in $t$.
(See \cite{beck_robins} for more details on Ehrhart series and  $h^*$-polynomials.)
The \emph{codegree} of $P$ is defined as $\codeg(P)=n+1-d$.
It is the smallest non-negative integer $c$ such that $cP$ contains an
interior lattice point.

Lattice polytopes with small degree are very special.
It is not difficult to see that lattice polytopes with degree $d= 0$ are precisely
unimodular  simplices (\cite[Proposition 1.4]{BN07}).
In \cite[Theorem 2.5]{BN07}, Batyrev and Nill  classified lattice polytopes
with degree $d= 1$.
They all belong to a special class of lattice polytopes, called \emph{Cayley polytopes}.
A Cayley polytope is a lattice polytope affinely isomorphic to
$$
P_0 *...* P_k \ := \ \Conv \big(P_0 \times \{{0}\}, P_1 \times \{e_1\},\cdots ,P_k \times \{e_k\}\big) \subset \mathbb{R}^{m}\times \mathbb{R}^{k},
$$
where the $P_i$'s are  $m$-dimensional lattice polytopes in $ \mathbb{R}^m$, and 
$\{e_1,...,e_k\}$ is a basis for $\mathbb{Z}^k$.
Batyrev and Nill also posed the following problem: to find a function $N(d)$ such that every lattice polytope of degree $d$ and dimension $n > N(d)$ is a Cayley polytope.
In \cite[Theorem 1.2]{hnp}, Hasse, Nill and Payne solved this problem with the quadratic polynomial 
$N(d) = (d^2+19d-4)/2$. 
It was conjectured in \cite[Conjecture 1.2]{DN10} that one can take $N(d)=2d$. 
This would be a sharp bound.
Indeed, let $ \Delta_n$ denote the standard  $n$-dimensional unimodular  simplex.
If $n$ is even, then 
$2 \Delta_n$ has degree $d=\frac{n}{2}$, but it is not a Cayley polytope.

While the methods of \cite{BN07} and \cite{hnp} are purely combinatorial,  
Hasse, Nill and Payne pointed out that these results can be interpreted 
in terms of Adjunction Theory on toric varieties.
This point of view was then explored by Dickenstein, Di Rocco and Piene in \cite{DDRP09} 
to study \emph{smooth} lattice polytopes with small degree. 
Recall that an $n$-dimensional lattice polytope $P$ is smooth if
there are exactly $n$ facets incident to each vertex of $P$,
and the primitive inner normal vectors of these facets form a basis 
for  $\mathbb{Z}^n$.
This condition is equivalent to saying that the toric variety associated to $P$ is smooth.
One has the following classification of smooth $n$-dimensional  
lattice polytopes $P$ with degree $d<\frac{n}{2}$ (or, equivalently, $\codeg(P) \geq \frac{n+3}{2}$).

\begin{teo}[{\cite[Theorem 1.12]{DDRP09}  and \cite[Theorem 1.6]{DN10}}] Let $P \subset \mathbb{R}^n$ be a smooth $n$-dimensional lattice polytope. Then $\codeg(P) \geq \frac{n+3}{2}$ if and only if $P$ is 
affinely isomorphic to a Cayley polytope $P_0 *...* P_k$, where all the $P_i$'s 
have the same normal fan, and $k  > \frac{n}{2}$. 
\label{DDRP}
\end{teo}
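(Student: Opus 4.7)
The plan is to translate everything into adjunction theory on the smooth polarized toric variety $(X,L) = (X_P, L_P)$ attached to $P$. Three translations are central: (i) lattice points in the interior of $mP$ are in bijection with a basis of $H^0(X, K_X + mL)$, so $\codeg(P)$ equals the effective threshold $\sigma(X,L):=\min\{m\in\Z_{>0}\mid H^0(X,K_X+mL)\neq 0\}$; (ii) smoothness of $P$ is equivalent to smoothness of $X$; and (iii) $P$ is a Cayley polytope $P_0 * \dots * P_k$ with common normal fan precisely when $X$ is a projective bundle $\mathbb{P}_Y(\cO_Y(D_0)\oplus\dots\oplus\cO_Y(D_k))$ over a smooth toric variety $Y$, with $L$ restricting to $\cO(1)$ on each fiber and $P_i$ being the polytope of $D_i$.

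\textbf{The easy direction.} Suppose $P = P_0 * \dots * P_k$ with $k > n/2$. A point $(x,y)\in\R^m\times\R^k$ lies in $cP$ iff $y\in c\Delta_k$ (where $\Delta_k:=\Conv(0,e_1,\dots,e_k)$) and $x$ lies in a corresponding Minkowski combination of the $P_i$'s. An interior lattice point of $cP$ must project to an interior lattice point of $c\Delta_k$, which forces $c\geq k+1$. Since $k>n/2$ implies $k+1\geq (n+3)/2$, we obtain $\codeg(P)\geq (n+3)/2$.

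\textbf{The hard direction.} Assume $\codeg(P)\geq (n+3)/2$, i.e.\ $\sigma(X,L)\geq (n+3)/2$. Let $\tau(X,L):=\min\{t\in\R_{>0}\mid K_X+tL\text{ is nef}\}$ be the nef value. Using the polytope description of sections of $K_X+mL$ one checks that the vanishing of $H^0(X, K_X+mL)$ for $m\leq (n+1)/2$ forces $\tau>(n+1)/2$. The Beltrametti--Sommese adjunction theory, refined through Reid's toric Mori theory, then produces the nef value morphism $\phi:X\to Y$: it is a Mori contraction of smooth toric varieties whose general fiber has dimension at least $\tau-1>(n-1)/2$. Smoothness of $X$ combined with the classification of toric extremal contractions forces $\phi$ to be a $\mathbb{P}^k$-bundle, $X\cong\mathbb{P}_Y(E)$, with $L$ restricting to $\cO(1)$ on fibers and $k=\tau-1>n/2$.

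\textbf{Splitting and main obstacle.} Since $X$ itself carries a torus action making $\phi$ equivariant, $E$ splits as a direct sum of torus-invariant line bundles $\cO_Y(D_0)\oplus\dots\oplus\cO_Y(D_k)$; unwinding the dictionary yields $P\cong P_{D_0}*\dots*P_{D_k}$ with all summands sharing the normal fan of $Y$, as desired. The principal obstacle throughout is the precise interplay between the numerical inequality $\codeg(P)\geq (n+3)/2$ and the geometry of extremal contractions: one must rule out divisorial and small contractions, and show that the fiber dimension strictly exceeds $n/2$. This control comes from the sharp lower bound $\tau>(n+1)/2$, together with the fact that for smooth toric varieties every extremal contraction is explicitly described combinatorially, so that the abstract adjunction-theoretic conclusion can be upgraded to the concrete projective bundle structure required.
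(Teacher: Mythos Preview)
The paper does not itself prove this theorem; it cites it as the combination of \cite[Theorem~1.12]{DDRP09} and \cite[Theorem~1.6]{DN10}, and explicitly explains (immediately after the statement) that \cite{DDRP09} proved it under the extra hypothesis that $P$ is $\mathbb{Q}$-normal, while \cite{DN10} removed that hypothesis by a separate, purely combinatorial argument showing that $\codeg(P)\geq \frac{n+3}{2}$ forces $\mathbb{Q}$-normality.

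Your outline reproduces the adjunction-theoretic half (that of \cite{DDRP09}) faithfully, but the step you flag as the ``principal obstacle'' is a genuine gap, not something your stated tools dispatch. Concretely: from $\codeg(P)\geq \frac{n+3}{2}$ you correctly deduce $\tau(P)>\frac{n+1}{2}$ (this is Remark~\ref{rem:Q-codegxcodeg}), and from $\tau>\frac{n+1}{2}$ together with $L\cdot C<2$ for any extremal curve you can indeed conclude that \emph{if} the nef value morphism $\phi_L$ has $\dim Y<n$, then some extremal ray it contracts gives a $\mathbb{P}^k$-bundle with $k>\frac{n}{2}$ (this is exactly how Case~1 of the proof of Theorem~\ref{mainthm} runs). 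What you do \emph{not} establish is that $\phi_L$ is of fiber type in the first place, i.e.\ that $P$ is $\mathbb{Q}$-normal. The Ionescu--Wi\'sniewski inequality together with $\tau>\frac{n+1}{2}$ does \emph{not} exclude a divisorial extremal ray in the face contracted by $\phi_L$: for a toric divisorial contraction with fiber $\mathbb{P}^d$ one has $\ell(R)=d$, and the inequalities only force $\frac{n+1}{2}<d\leq n-1$, which is perfectly possible for $n\geq 4$. Ruling this out is precisely the content of \cite{DN10}, and it was done by a direct combinatorial analysis of the adjoint polytopes $P^{(t)}$, not by Mori theory. Your sketch therefore amounts to the \cite{DDRP09} argument with the \cite{DN10} input asserted rather than supplied.
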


Theorem~\ref{DDRP} was first proved in \cite{DDRP09} under the additional assumption that 
$P$ is a \emph{$\mathbb{Q}$-normal} polytope. (See Definition~\ref{defn_qnormal} for the notion of
$\mathbb{Q}$-normality.) 
Then, using combinatorial methods, Dickenstein and Nill showed in  \cite{DN10}
that  the inequality $\codeg(P) \geq \frac{n+3}{2}$  implies that $P$ is $\mathbb{Q}$-normal. 

\

The aim of this paper is to extend this classification.
We address smooth $n$-dimensional lattice polytopes $P$  of degree 
$d< \frac{n}{2}+1$ (or, equivalently, $\codeg(P) \geq \frac{n+1}{2}$).
Not all such polytopes are Cayley polytopes, and
we need the following generalization of the Cayley condition, introduced in \cite{DDRP09}.

\begin{de} \label{cay}
Let $P_0, \dots, P_k \subset \mathbb{R}^m$  be $m$-dimensional lattice polytopes, 
and $s$ a positive integer.   Set
$$
[P_0 *...* P_k]^s \ := \ \Conv \big(P_0 \times \{{0}\}, P_1 \times \{se_1\},\cdots ,P_k \times \{se_k\}\big) \subset \mathbb{R}^{m}\times \mathbb{R}^{k},
$$
where $\{e_1,...,e_k\}$ is a basis for $\mathbb{Z}^k$.
A lattice polytope $P$ is an $s^{th}$  \emph{order generalized Cayley polytope} 
if it is affinely isomorphic to a polytope $[P_0 *...* P_k]^s$ as above.
If all the $P_i$'s  have the same normal fan, we write 
$P = \Cayley^s(P_0, \ldots ,P_k)$, and say that $P$ is \emph{strict}.
\end{de}

The following is our main result:

\begin{teo} Let $P \subset \mathbb{R}^n$ be a smooth $n$-dimensional $\mathbb{Q}$-normal lattice polytope. Then $\codeg(P) \geq \frac{n+1}{2}$ if and only if $P$ is affinelly isomorphic to one of the following polytopes:
\begin{enumerate}
\item[{\rm (i)}] $s\Delta_1$, $s\geq 1 \ (n=1)$;
\item[{\rm (ii)}] $3\Delta_3 \ (n=3)$;
\item[{\rm (iii)}] $2\Delta_n$;
\item[{\rm (iv)}] $\Cayley^1(P_0, \ldots, P_k)$, where $k \geq \frac{n-1}{2}$;
\item[{\rm (v)}] $\Cayley^2(a_0\Delta_1, a_1\Delta_1, \ldots, a_{n-1}\Delta_1)$, where $n$ is odd 
and the $a_i$'s are congruent modulo $2$. 
\end{enumerate}
\label{mainthm}
\end{teo}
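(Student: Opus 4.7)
I would follow and refine the Adjunction-Theoretic approach of \cite{DDRP09}, lowering the codegree threshold from $(n+3)/2$ to $(n+1)/2$. Let $(X,L)$ be the polarized smooth toric variety associated to $P$. Under the standard identification of interior lattice points of $cP$ with a basis of $H^0(X,K_X+cL)$, the inequality $\codeg(P)\geq\frac{n+1}{2}$ says that $K_X+cL$ is ineffective for all integers $c<\frac{n+1}{2}$. The hypothesis that $P$ be $\mathbb{Q}$-normal is precisely what guarantees that this ineffectivity threshold coincides with the \emph{nef value} $\tau(X,L)$, yielding the clean geometric condition $\tau(X,L)\geq\frac{n+1}{2}$.

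Next, I would analyze the nef value morphism $\phi_\tau\colon X\to Y$. Its general fiber $F$ is a smooth polarized toric variety with $K_F+\tau L|_F \sim_{\mathbb{Q}} 0$, and Fujita's inequality $\tau\leq\dim F+1$ forces $\dim F\geq\frac{n-1}{2}$. Combining the classification of polarized manifolds of large nef value (Fujita, Ionescu) with toricity restricts $(F,L|_F)$ to a short list: projective spaces with $\mathcal{O}(1)$, $\mathcal{O}(2)$, or (in dimension $3$) $\mathcal{O}(3)$. Accordingly, either $Y$ is a point and we recover the isolated cases (i)--(iii), or $\phi_\tau$ is a projective bundle $X\to Y$ over a positive-dimensional toric base, whose polytope is a strict generalized Cayley polytope $\Cayley^s(P_0,\ldots,P_k)$ with slices $P_i$ sharing the normal fan of $Y$.

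Translating back to polytopes: fibers polarized by $\mathcal{O}(1)$ give $s=1$ and case (iv), with the bound $k\geq\frac{n-1}{2}$ coming directly from $\dim F\geq\frac{n-1}{2}$. Fibers polarized by $\mathcal{O}(2)$ give $s=2$; smoothness and $\mathbb{Q}$-normality force those fibers to be $\mathbb{P}^1$, each slice to be a scaled segment $a_i\Delta_1$, and an integrality constraint on the lattice sections of the bundle imposes the parity condition $a_i\equiv a_j\pmod{2}$, yielding case (v). The converse direction is then a straightforward Ehrhart-polynomial computation of $\codeg$ for each polytope in the list, together with a direct verification of smoothness and $\mathbb{Q}$-normality.

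The main obstacle is the borderline case $\tau=\frac{n+1}{2}$, where the Fujita--Ionescu list is widest and contains exotic polarized manifolds (low-dimensional toric del Pezzo and Mukai types, notably $(\mathbb{P}^3,\mathcal{O}(3))$, plus various quadric and second-order scroll candidates) that must either be identified with one of (i)--(v) or excluded using smoothness and $\mathbb{Q}$-normality. Pinning down the exceptional case (ii) and correctly extracting the parity constraint of (v) is the most delicate combinatorial--geometric bookkeeping, and this is where the improvement over \cite{DDRP09} genuinely lives.
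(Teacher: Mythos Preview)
Your overall strategy is right in spirit, but there is a genuine error in the first reduction step that propagates through the argument. You assert that $\mathbb{Q}$-normality converts the codegree hypothesis into the clean bound $\tau(X,L)\geq\frac{n+1}{2}$. This is not what one obtains: $\mathbb{Q}$-normality gives $\tau=\codeg_{\mathbb{Q}}(P)$ and the relation $\lceil\codeg_{\mathbb{Q}}(P)\rceil=\codeg(P)$, so from $\codeg(P)\geq\frac{n+1}{2}$ you only deduce $\tau>\frac{n-1}{2}$. The gap is not cosmetic: polytopes of type~(v) have $\tau=\frac{n}{2}<\frac{n+1}{2}$ (the general fiber is $(\mathbb{P}^{n-1},\mathcal{O}(2))$, giving $\tau=\frac{n}{2}$), so with your claimed bound you would never see them. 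The paper works with the correct strict inequality $\tau>\frac{n-1}{2}$ throughout.

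There is also a structural confusion in your treatment of~(v). You write that ``smoothness and $\mathbb{Q}$-normality force those fibers to be $\mathbb{P}^1$'', but in $\Cayley^2(a_0\Delta_1,\ldots,a_{n-1}\Delta_1)$ the base is $\mathbb{P}^1$ (that is what the one-dimensional slices $a_i\Delta_1$ record) and the fibers of the nef value morphism are $\mathbb{P}^{n-1}$ polarized by $\mathcal{O}(2)$. The paper reaches this by bounding $L\cdot C$ for an extremal curve $C$ contracted by $\phi_L$ via the Ionescu--Wi\'sniewski inequality: from $n+1\geq -K_X\cdot C=\tau\,(L\cdot C)>\frac{n-1}{2}(L\cdot C)$ one gets $L\cdot C\leq 5$, and the case $L\cdot C=2$ forces $\dim F=n-1$, hence a $\mathbb{P}^{n-1}$-bundle over $\mathbb{P}^1$. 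The parity condition then comes from Lemma~\ref{le1}. Your plan to read everything off a Fujita--Ionescu list for the fiber is a reasonable alternative organization, but it only works once you have the right inequality on $\tau$ and the right identification of which factor is the base and which is the fiber.
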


\begin{cor} Let $P \subset \mathbb{R}^n$ be a smooth $n$-dimensional $\mathbb{Q}$-normal lattice polytope. If $\codeg(P) \geq \frac{n+1}{2}$, then $P$ is a strict generalized Cayley polytope.
\label{coro}
\end{cor}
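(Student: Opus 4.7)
My plan is to derive Corollary~\ref{coro} directly from Theorem~\ref{mainthm} by a case-by-case inspection of the five families (i)--(v). Cases (iv) and (v) are immediate: these polytopes are, by hypothesis, already presented as $\Cayley^{1}(P_{0},\ldots,P_{k})$ and $\Cayley^{2}(a_{0}\Delta_{1},\ldots,a_{n-1}\Delta_{1})$ respectively, which is exactly the strict form required.

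The substantive observation concerns the dilated simplices $s\Delta_{1}$, $3\Delta_{3}$, and $2\Delta_{n}$ of (i)--(iii). I would handle all three uniformly by taking the $m=0$ case of Definition~\ref{cay}. When each factor $P_{i}\subset\mathbb{R}^{0}$ is a single point, the factors trivially share a common (zero-dimensional) normal fan, and the construction specializes to
\[
[\mathrm{pt}\ast\cdots\ast\mathrm{pt}]^{s}\ =\ \Conv\bigl(0,\,se_{1},\,\ldots,\,se_{k}\bigr)\ =\ s\Delta_{k}\ \subset\ \mathbb{R}^{k}.
\]
Hence $s\Delta_{k}=\Cayley^{s}(\mathrm{pt},\ldots,\mathrm{pt})$ with $k+1$ point factors is strict generalized Cayley for every $s\geq 1$ and every $k\geq 1$. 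Specializing gives (i) as $k=1$ with $s$ arbitrary, (ii) as $k=3,\ s=3$, and (iii) as $k=n,\ s=2$.

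With these identifications each polytope appearing in Theorem~\ref{mainthm} is strict generalized Cayley, and Corollary~\ref{coro} follows. There is no serious obstacle: the corollary is a cosmetic repackaging of the classification into a single uniform statement, and all the real mathematical content resides in Theorem~\ref{mainthm} itself. The only minor point one should be careful about is to confirm that the convention $m=0$ is admitted by Definition~\ref{cay}; since the definition only asks that each $P_{i}\subset\mathbb{R}^{m}$ be $m$-dimensional, without requiring $m\geq 1$, this is unproblematic.
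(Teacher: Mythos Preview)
Your proof is correct and is precisely the argument the paper implicitly intends: the corollary is stated without proof immediately after Theorem~\ref{mainthm}, and the only content needed is exactly your observation that $s\Delta_k=\Cayley^s(\mathrm{pt},\ldots,\mathrm{pt})$ via the $m=0$ case of Definition~\ref{cay}, which handles families (i)--(iii), while (iv) and (v) are strict generalized Cayley by definition. There is nothing to compare; your write-up simply makes explicit what the paper leaves to the reader.
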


In Example \ref{contra}, we describe a smooth $n$-dimensional lattice polytope 
$P \subset \mathbb{R}^n$ with 
$\codeg(P) = \frac{n+1}{2}$ which is not a generalized Cayley polytope. 
So one cannot drop the assumption of $\mathbb{Q}$-normality in Corollary \ref{coro}.

Our proof of Theorem~\ref{mainthm} follows the strategy of \cite{DDRP09}: we
interpret the degree of $P$ as a geometric invariant of the corresponding polarized variety $(X,L)$,
and then apply techniques from Adjunction Theory and Mori Theory. 
This approach naturally leads to introducing more refined invariants of  lattice  polytopes,
which are the polytope counterparts of  important invariants of polarized varieties.
In particular, we consider the \emph{$\mathbb{Q}$-codegree}  $\codeg_{\mathbb{Q}}(P)$
of  $P$ (see Definition~\ref{defn_qnormal}).
This is a rational number that carries information about the birational geometry of $(X,L)$.
For $\mathbb{Q}$-normal smooth lattice polytopes, it
satisfies $ \lceil \codeg_{\mathbb{Q}}(P) \rceil = \codeg(P)$.

The following is the polytope version of a conjecture by Beltrametti and Sommese.

\begin{conj}[{\cite[7.18]{BS95}}] Let $P\subset \mathbb{R}^n$ be a smooth $n$-dimensional lattice polytope. 
If $\codeg_{\mathbb{Q}}(P) > \frac{n+1}{2}$, then $P$ is $\mathbb{Q}$-normal.
\label{bs}
\end{conj}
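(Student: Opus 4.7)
The plan is to follow the strategy employed throughout the paper: pass from $P$ to its associated smooth polarized toric pair $(X,L)$, translate the $\mathbb{Q}$-codegree into the nef value $\tau(X,L):=\min\{t\in\mathbb{Q}_{>0} \colon K_X+tL \text{ is nef}\}$, and interpret $\mathbb{Q}$-normality as asking that $K_X+\tau L$ be the pullback of a $\mathbb{Q}$-ample class under the nef value morphism $\phi:X\to Y$. Under this dictionary the statement becomes exactly the toric case of the original Beltrametti--Sommese conjecture on polarized manifolds.

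First I would invoke the base-point-free theorem (automatic on the smooth toric $X$, which is a Mori dream space) to produce $\phi:X\to Y$ as the contraction of the extremal face on which $K_X+\tau L$ vanishes. The hypothesis $\tau > \frac{n+1}{2}$ enters through the length of any extremal ray $R\subset \NE(X)$ contracted by $\phi$: if $C$ is a minimal rational curve generating $R$, then $\ell(R)=-K_X\cdot C=\tau(L\cdot C)\geq \tau > \frac{n+1}{2}$. Plugging this into Wi\'sniewski's inequality $\dim F + \dim E \geq n+\ell(R)-1$ for the general fiber $F$ and the exceptional locus $E$ of $\phi$ forces $\dim F$ to exceed $\frac{3n-1}{4}$, which in particular rules out $\phi$ being birational and keeps $Y$ of small dimension.

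The next step would exploit that $\phi$ is a toric morphism, so that a general fiber $F$ is itself a smooth polarized toric variety whose polytope is a slice of $P$. The large nef value forces $(F,L|_F)$ to lie in the short list of high-codegree smooth toric polarized pairs produced by Theorem~\ref{mainthm} (together with its $\mathbb{Q}$-codegree analogue applied inductively on the fiber). From this classification one aims to conclude that $\phi$ is equidimensional and that the moment polytope of $X$ combinatorially fibers over that of $Y$ with fibers of the described type; this is precisely what $\mathbb{Q}$-normality asserts.

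The hard part will be establishing equidimensionality of $\phi$. Jumping fibers of toric Mori contractions are not a priori excluded, and ruling them out under the strict inequality $\tau > \frac{n+1}{2}$ appears to require a delicate local analysis of the fan around a hypothetical jumping fiber, together with a sharp comparison between the nef values of $X$ and of the special fibers. This is the step separating Conjecture~\ref{bs} from its already-proved integral codegree counterpart, where the integrality of $\codeg(P)$ supplies slack that is simply unavailable in the $\mathbb{Q}$-setting; any proof therefore has to use the toric hypothesis substantively through the classification of smooth toric Fano fibers.
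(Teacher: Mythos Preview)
This statement is recorded in the paper as an open \emph{conjecture} (the polytope translation of Beltrametti--Sommese~7.18); the paper offers no proof, so there is nothing to compare your attempt against. Your write-up is a strategy sketch, and you yourself flag in the last paragraph that the decisive step is missing.

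Two points in your dictionary are off. First, the hypothesis bounds $\codeg_{\mathbb{Q}}(P)=\sigma(X,L)^{-1}$, not the nef value $\tau(P)=\lambda(X,L)^{-1}$; the two coincide only under $\mathbb{Q}$-normality, which is exactly the conclusion sought. (It is true that $\tau\geq\codeg_{\mathbb{Q}}$ always, so $\tau>\tfrac{n+1}{2}$ does follow.) Second, $\mathbb{Q}$-normality is, by the discussion in Sections~\ref{subsec_adj}--\ref{mori}, equivalent simply to $\dim Y<\dim X$, i.e.\ to $\phi_L$ being of fiber type. The ``hard part'' you identify---equidimensionality of $\phi$ and a Cayley-type fibration of the moment polytope---is neither what $\mathbb{Q}$-normality asserts nor what needs to be shown.

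The genuine gap is this: suppose for contradiction that $\phi_L$ is birational, and pick an extremal ray $R$ in the contracted face (its contraction $\phi_R$ is then also birational). Since $\tau>\tfrac{n+1}{2}$ forces $L\cdot C=1$ for the extremal curve, one gets $\mathfrak{l}(R)=\tau$, and Wi\'sniewski's inequality with $\dim E_R\leq n-1$ yields only $\tau\leq n-1$. For $n\geq 4$ this is perfectly compatible with $\tau>\tfrac{n+1}{2}$, so no contradiction arises. The argument bounds $\tau$, whereas what you would need is an upper bound on $\codeg_{\mathbb{Q}}$, which lives on the pseudo-effective side and is not touched by extremal-ray/length considerations. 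That is precisely why the conjecture is still open, even in the toric case.
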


\begin{rem}
If Conjecture \ref{bs} holds, then Theorem \ref{mainthm} and Proposition \ref{prop1} 
imply that smooth  lattice  polytopes $P$ with 
$\codeg_{\mathbb{Q}}(P) > \frac{n+1}{2}$ are those in (iv) with $k \geq \frac{n}{2}$. 
These  have $\mathbb{Q}$-codegree $\geq \frac{n+2}{2}$. 
Hence, if Conjecture \ref{bs} holds, then the $\mathbb{Q}$-codegree of smooth lattice polytopes
does not assume values in the interval $\left( \frac{n+1}{2}, \frac{n+2}{2} \right)$. 
\end{rem}

\

\noindent {\bf Notation and conventions.}
We mostly follow the notation of \cite{fulton} for toric geometry.
Given a fan $\Sigma$ in $\R^n$, we denote by $X_{\Sigma}$ the
corresponding toric variety.
For any cone $\sigma\in \Sigma$, we denote by $V(\sigma)$ the 
$T$-invariant subvariety of $X_{\Sigma}$ associated to $\sigma$.
For each integer $m\in\{1,\dots, n\}$, 
we denote by $\Sigma(m)$ the set of $m$-dimensional cones of $\Sigma$.
We identify  $\Sigma(1)$ with the set of primitive vectors of the 
$1$-dimensional cones of $\Sigma$.
Given a polytope $P\subset  \R^n$, we denote by $\Sigma_P$
the normal fan of $P$. 

By abuse of notation, we identify a vector bundle on a variety with its corresponding 
locally free sheaf of sections. 
Given a vector bundle $\mathcal{E}$ on a variety $Y$, we denote by $\mathbb{P}_Y(\mathcal{E})$ 
the Grothendieck projectivization $\Proj(\Sym(\mathcal{E}))$.

\

\noindent {\bf  Acknowledgments.}
The first named author was partially supported by CNPq and Faperj Research Fellowships.
We thank Edilaine Nobili for constant and useful discussions on birational geometry of toric varieties.

%
%

\section{Preliminaries}

\subsection{Adjoint polytopes, nef value and $\mathbb{Q}$-codegree}
\label{nef}

Let $P \subset \mathbb{R}^n$ be an $n$-dimensional lattice polytope. 
For each $t\in \R_{\geq 0}$, we let $P^{(t)}$ be the (possibly empty) polytope obtained 
by moving each facet of $P$ toward its inner normal direction by a ``lattice distance'' of $t$ units.  
More precisely, if $\Sigma_P(1)=\{\eta_i\}_{i\in \{1, \dots, r\}}$, and $P$ is given by facet presentation
$$
P\ =\ \Big\{
x\in \R^n \ \Big| \ \langle \eta_i,x\rangle \geq -a_i, \ 1\leq i\leq r\ 
\Big\},
$$
then $P^{(t)}$ is given by
$$
P^{(t)}\ =\ \Big\{
x\in \R^n \ \Big| \  \langle \eta_i,x\rangle \geq -a_i+t, \ 1\leq i\leq r\ 
\Big\}.
$$
These are called  \emph{adjoint polytopes} in \cite{drhnp}.
Set 
$$
\sigma(P) := sup\ \Big\{ t\geq 0 \ \Big| \  P^{(t)}\neq  \varnothing \Big\}.
$$
As we increase $t$ from $0$ to $\sigma(P)$,  $P^{(t)}$ will change its
combinatorial type at some critical values, the first one being 
$$
\lambda(P):= sup\ \Big\{
t\geq 0 \ \Big| \ P^{(t)}\neq  \varnothing \text{ and }
\Sigma_t:=\Sigma_{P^{(t)}}= \Sigma_P \Big\}\leq \sigma(P).
$$
By  \cite[Lemma 1.13]{drhnp}, $\lambda(P)>0$ if and only if 
the normal fan of $P$ is $\Q$-Gorenstein. This happens for instance when $P$ is 
a smooth lattice polytope.

\begin{de} \label{defn_qnormal}
Let $P \subset \mathbb{R}^n$ be an $n$-dimensional lattice polytope. 

We say that $P$ is \emph{$\Q$-normal} if  $\sigma(P)=\lambda(P)$. 

The $\mathbb{Q}$-\emph{codegree} of $P$ is 
$$\codeg_{\mathbb{Q}}(P)\ :=\ \sigma(P)^{-1}.$$

Suppose that $\lambda(P)>0$. Then the  \emph{nef value} of $P$ is 
$$
\tau(P)\ :=\ \lambda(P)^{-1}.$$
\end{de}

\begin{rem}\label{rem:Q-codegxcodeg}
Let $P$ be a lattice polytope. Then $\codeg_{\mathbb{Q}}(P)\leq \tau(P)$.
For any positive integer $k$,
the interior lattice points of $kP$ are precisely the lattice points of  $(kP)^{(1)}$, and
$(kP)^{(1)}\neq \varnothing$ if and only if $P^{(1/k)} \neq \varnothing$. 
Hence $\codeg(P)\geq \lceil \codeg_{\mathbb{Q}}(P) \rceil$.
By \cite[Lemma 2.4]{DDRP09}, for a smooth lattice polytope $P$,  $\tau(P)>\codeg(P)-1$.
Therefore, for a $\Q$-normal smooth lattice polytope $P$ we have
$$
\lceil \codeg_{\mathbb{Q}}(P) \rceil = \codeg(P).
$$
\end{rem}

\begin{rem}
Let $P$ be a lattice polytope, and
$(X,L)$ the corresponding polarized toric variety.
When $X$ is $\Q$-Gorenstein (i.e., some nonzero multiple of $K_X$ is Cartier),
the family of adjoint polytopes $\{P^{(t)}\}_{0\leq t\leq \sigma(P)}$ is
the polytope counterpart of the \emph{Minimal Model Program with scaling}, 
established in \cite{bchm}.
The projective varieties $X_t=X_{\Sigma_t}$ that appear as
we increase $t$ from $0$ to $\sigma(P)$ are precisely the varieties that appear in the 
Minimal Model Program for $X$ with scaling of $L$.
A precise statement and proof can be found in \cite{tese_edilaine}.
\end{rem}

\subsection{Adjunction Theory} \label{subsec_adj}

Let $(X,L)$ be a smooth polarized variety.
This means that $X$ is a smooth projective variety, and $L$ is an ample divisor on $X$.
Divisors of the form $L+mK_X$, $m> 0$, are called \emph{adjoint divisors},
and play an important role in classification of projective varieties. 
We refer to  \cite{BS95} for an overview of classical adjuntion theory.

We denote by $N^1(X)$  the (finite-dimensional) $\R$-vector space of $\R$-divisors on $X$
modulo numerical equivalence. 
The \emph{nef cone} of $X$ is the closed convex cone  $\nef(X)\subset N^1(X)$
generated by classes of nef divisors on $X$ 
(i.e., divisors having nonnegative intersection with every curve of $X$). 
By Kleiman's ampleness criterion, a divisor 
on $X$ is ample if and only if its class lies in the interior of $\nef(X)$.
The \emph{cone of pseudo-effective divisors} of $X$ is the closed convex cone  $\eff(X)\subset N^1(X)$
generated by classes of effective divisors on $X$. 
By Kodaira's lemma,  the class of a divisor $D$ lies in the interior of $\eff(X)$ if and only if 
the linear system $\big|kD\big|$  defines a generically finite map
for $k$ sufficiently large and divisible.
In this case, such map is in fact birational, and we say that 
$D$ is \emph{big}.

The following are important invariants of the polarized variety $(X,L)$. 
The \emph{nef  threshold} of $(X,L)$ is
$$
\lambda(X,L)= \sup\big\{t\geq 0\ \big| \ [L+tK_X] \in \nef(X) \big\}.
$$
This is a rational number by Kawamata's Rationality Theorem (see \cite[Theorem 3.5]{kollar_mori}).
The \emph{effective threshold} of $(X,L)$ is
$$
\sigma(X,L)= \sup\big\{t\geq 0\ \big|\ [L+tK_X] \in \eff(X)\big\}.
$$
It follows from \cite{bchm} that this is also a rational number 
(see  \cite[Theorem 5.2]{araujo_after_batyrev}).

\begin{say}[The toric case] \label{toric_adjunction}
Next we specialize to the toric case. We refer to \cite{fulton} for 
details and proofs.

Let $X=X_{\Sigma}$ be a smooth projective $n$-dimensional toric variety,  write 
$\Sigma(1)=\{\eta_i\}_{i\in \{1, \dots, r\}}$, and let $L_i=V(\eta_i)$ be the 
$T$-invariant divisor associated to $\eta_i$.
The classes of the $L_i$'s span $N^1(X)\cong \Pic(X)\otimes \R$,
and generate the cone $\eff(X)$.

The canonical divisor of $X$ can be written as $K_X= -\sum_{i=1}^r L_i$.

Let $D= \sum_{i=1}^r a_iL_i$ be an invariant $\R$-divisor on $X$.
We associate to $D$ the following (possibly empty) polytope:
$$
P_D\ =\ \Big\{
x\in \R^n \ \Big| \ \langle\eta_i,x\rangle \geq -a_i, \ 1\leq i\leq r\ 
\Big\}.
$$
Geometric properties of the divisor $D$ are reflected by 
combinatorial properties of the polytope $P_D$.
For instance:
\begin{itemize}
	\item $D$ is ample if and only if $\Sigma_{P_D}=\Sigma$.
	\item $D$ is big if and only if $P_D$ is $n$-dimensional.
	\item $[D]\in \eff(X)$ if and only if $P_D\neq \varnothing$.
\end{itemize}

The above equivalences allow us to reinterpret the nef value and $\mathbb{Q}$-codegree
of a lattice polytope in terms of invariants of the associated polarized toric variety. 
Let $P \subset \mathbb{R}^n$ be a smooth $n$-dimensional lattice polytope, and denote by 
$(X,L)$ the associated polarized toric variety. 
Notice that the polytope associated to the adjoint $\R$-divisor $L+tK_X$
is precisely the adjoint polytope $P^{(t)}$.
Therefore 
\begin{center}
$\lambda(P)=\lambda(X,L)$ \ \ \ and \ \ \ $\sigma(P)=\sigma(X,L)$.
\end{center}
Moreover, $\dim P^{(t)}=n$ for $0\leq t<\sigma(P)$, and  $\dim P^{(\sigma(P))}<n$
(see also \cite[Proposition 1.6]{drhnp}). 
\end{say}

\subsection{Ingredients from Mori Theory} \label{mori}

Let $X$ be a smooth projective variety. 
We denote by $N_1(X)$ the $\R$-vector space
of $1$-cycles on $X$ with real coefficients modulo numerical equivalence.
The \emph{Mori cone} of $X$ is the  closed convex cone $\overline{NE}(X)\subset N_1(X)$
generated by  classes of irreducible curves on $X$. 
Intersection product of divisors and curves
makes $N^1(X)$ and $N_1(X)$ dual vector spaces, and  
$\nef(X)\subset N^1(X)$ and $\overline{NE}(X)\subset N_1(X)$ dual cones.

Let $N$ be a face of  $\overline{NE}(X)$.
The \emph{contraction of $N$} is a surjective morphism $\phi_N:X\to Y$ with connected fibers onto 
a normal variety satisfying the following condition: 
the class of an irreducible curve $C \subset X$ lies in $N$ if and only if $\phi_N(C)$ is a point. 
Stein Factorization guarantees that if 
such contraction exists, it is unique up to isomorphism.
By the Contraction Theorem,  
if $K_X$ is negative on $N\setminus\{0\}$ (in which case we say that 
$N$ is a \emph{negative extremal face} of $\overline{NE}(X)$), then
$\phi_N$ exists (see \cite[Theorem 3.7]{kollar_mori}). 
More precisely, if $D$ is any nef divisor such that $(D=0)\cap \overline{NE}(X) =N$
and $k$ is sufficiently large and divisible, then $\big|kD\big|$ defines the 
contraction of $N$ (see \cite[Theorem 3.3]{kollar_mori}).

Let $L$ be an ample divisor on $X$, and set $\lambda:=\lambda(X,L)$.
The adjoint $\Q$-divisor $L+\lambda K_X$ is nef but not ample,
and thus defines a negative extremal face $N$ of the Mori cone $\overline{NE}(X)$.
We call the contraction of $N$ the \emph{nef value morphism} of $(X,L)$,
and denote it by $\phi_L:X\to Y$.
If follows from the discussion of Section~\ref{subsec_adj} that $\dim(Y)<\dim(X)$
if and only if $\lambda(X,L)= \sigma(X,L)$.

Let $R$ be a negative extremal ray of the Mori cone $\overline{NE}(X)$, and $\phi_R:X\to Y$ the 
contraction of $R$. 
The {\it lenght} of  $R$ is
$$
\mathfrak{l}(R) := min \big\{-K_X \cdot C \ \big| \ C \subset X \text{ rational curve 
contracted by }\phi_R \big\}.
$$ 
It satisfies $\mathfrak{l}(R)\leq \dim(X)+1$ (see \cite[Theorem 3.7]{kollar_mori}). 
A rational curve $C\subset X$ such that $[C]\in R$ and $\mathfrak{l}(R)=-K_X\cdot C$
is called an \emph{extremal curve}.
Let  $E_R\subset X$ be the exceptional locus of $\phi_R$, i.e., 
the locus of points at which $\phi_R$ is not an isomorphism. 
The following inequality is due to Ionescu-Wi\'sniewski (see  \cite[Theorem 6.36]{BS95}).
Let $E$ be an irreducible component of $E_R$, and 
$F$ an irreducible component of a fiber of the restriction $\phi_R |_E$. Then
\begin{equation}
\dim(E) + \dim(F) \geq \dim(X) + \mathfrak{l}(R) -1.
\label{inequality}
\end{equation} 
If $E_R=X$, i.e., $\dim(Y) < \dim(X)$, we say that $\phi_R$ is a 
\emph{contraction of fiber type}. 
If $\dim E_R=\dim(X)-1$, then $\phi_R$ is birational and $E_R$ is a prime divisor.
In this case we say that $\phi_R$ is a \emph{divisorial contraction}.

The following result describes the nef value morphism of polarized varieties with small nef
threshold. It follows immediately from \cite[Theorems 3.1.1 and 2.5]{BSW92}.

\begin{teo} \label{thm:BSW}
Let $(X,L)$ be a smooth polarized variety of dimension $n$ with associated 
nef value morphism $\phi_L:X\to Y$.
Suppose that $\tau:=\lambda(X,L)^{-1}\geq \frac{n+1}{2}$ and $1\leq \dim(Y)\leq n-1$.
Then there exists a negative extremal ray $R\subset \overline{NE}(X)$ of length $\mathfrak{l}(R)=\tau$
whose associated contraction $\phi_R:X\to Z$ is of fiber type and factors $\phi_L$.
\end{teo}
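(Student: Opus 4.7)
\noindent\textit{Plan.} I would translate the claim into Mori theory on $(X,L)$ and extract the required extremal ray from the face of $\overline{NE}(X)$ contracted by $\phi_L$.

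Since $\tau=\lambda(X,L)^{-1}$, the divisor $K_X+\tau L$ is nef but not ample, and $\phi_L$ is by definition the contraction of the face $N\subset\overline{NE}(X)$ on which $K_X+\tau L$ vanishes. As $L$ is ample, every $[C]\in N\setminus\{0\}$ satisfies $K_X\cdot C=-\tau\,L\cdot C<0$, so $N$ is a $K_X$-negative extremal face. Its extremal rays $R_1,\dots,R_m$ are precisely the extremal rays of $\overline{NE}(X)$ contained in $N$, and for each $R_i$ with extremal rational curve $C_i$ generating it we have
$$
\mathfrak{l}(R_i)\;=\;-K_X\cdot C_i\;=\;\tau\,(L\cdot C_i)\;\geq\;\tau,
$$
since $L\cdot C_i\in\mathbb{Z}_{>0}$. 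Combining with Mori's bound $\mathfrak{l}(R_i)\leq n+1$ and $\tau\geq(n+1)/2$ yields $L\cdot C_i\in\{1,2\}$. The case $L\cdot C_i=2$ gives $\mathfrak{l}(R_i)=n+1$, and~\eqref{inequality} then forces any fiber of $\phi_{R_i}$ to have dimension at least $n$, so $\phi_{R_i}$ contracts $X$ to a point; since $\phi_L$ factors through $\phi_{R_i}$ this would imply $\dim Y=0$, contradicting $\dim Y\geq 1$. Hence $L\cdot C_i=1$ and $\mathfrak{l}(R_i)=\tau$ for every $i$.

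The remaining task is to show that some $R_i$ gives a fiber-type contraction; the factoring of $\phi_L$ through $\phi_R$ then follows from the universal property of contractions of faces of the Mori cone. Assume, for contradiction, that every $\phi_{R_i}\colon X\to Z_i$ is birational. Running Mori's program by contracting the $R_i$ in turn produces a sequence of birational morphisms $X\to Z_{i_1}\to Z_{i_1 i_2}\to\cdots$ whose composition realizes $\phi_L\colon X\to Y$, up to Stein factorization. Since a composition of birational contractions is birational while $\phi_L$ is of fiber type by the hypothesis $\dim Y<n$, this gives a contradiction, so some $R_i$ must have fiber-type contraction. The main obstacle lies in making this last step fully rigorous: the intermediate models $Z_{i_1\dots i_j}$ need not be smooth, so one has to extend Mori's program to $\mathbb{Q}$-factorial terminal varieties and verify that each extremal contraction performed on such a model actually comes from one of the $R_i\subset\overline{NE}(X)$. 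The length estimate $\mathfrak{l}(R_i)=\tau\geq(n+1)/2$ is decisive here: via~\eqref{inequality} it forces the exceptional loci of the $\phi_{R_i}$ to be highly restricted — in particular ruling out small contractions in the relevant range — and it is precisely on this estimate that the argument of Beltrametti--Sommese--Wi\'sniewski in~\cite{BSW92} hinges.
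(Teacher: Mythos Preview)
The paper does not prove this theorem; it simply records that it ``follows immediately from \cite[Theorems 3.1.1 and 2.5]{BSW92}.'' Your sketch is therefore already more explicit than the paper, and the first half is correct: the length computation $\mathfrak{l}(R_i)=\tau\,(L\cdot C_i)$ for each extremal ray $R_i\subset N$, the bound $L\cdot C_i\in\{1,2\}$ coming from $\tau\geq(n+1)/2$ together with $\mathfrak{l}(R_i)\leq n+1$, and the elimination of $L\cdot C_i=2$ via~\eqref{inequality} are all standard and sound.

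The gap is exactly where you locate it, but your proposed repair via an iterated relative MMP is not the right mechanism. After a birational contraction $X\to Z_{i_1}$, the $K$-negative extremal rays of $\overline{NE}(Z_{i_1}/Y)$ are not in general images of the remaining $R_j\subset\overline{NE}(X)$, and the length bound $\tau\geq(n+1)/2$ has no direct meaning on the singular model $Z_{i_1}$; running the MMP to termination over $Y$ would only produce a fiber-type contraction on some birational model of $X$, not on $X$ itself, which is what the statement requires. The argument in \cite{BSW92} does not leave $X$: since $\phi_L$ has positive-dimensional general fiber, the face $N$ contains the class of a rational curve through a general point of $X$, and one then shows (using the length estimate and deformation of rational curves) that the locus swept out by curves in some single ray $R_i\subset N$ already covers $X$, forcing $E_{R_i}=X$. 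Since both you and the paper ultimately rely on \cite{BSW92}, the cleanest fix is to keep your length computation and then cite \cite{BSW92} for the existence of the fiber-type ray, as the paper does.
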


When $X$ is toric, we will see below that the contraction $\phi_R$ provided by Theorem~\ref{thm:BSW}
is a $\p^{\tau-1}$-bundle over a smooth toric variety $Z$.

\begin{say}[The toric case] \label{toric_mmp}
We now specialize to the toric case. We refer to \cite{reid} for details and proofs.

Let $X=X_{\Sigma}$ be a smooth projective $n$-dimensional toric variety.
Then $\NE(X)$ is generated by $T$-invariant rational curves $V(\omega), \ \omega \in \Sigma(n-1)$.
In particular, it is a rational polyhedral cone.
Moreover, any face $N$  of  $\NE(X)$ admits a contraction $\phi_N:X\to Y$
onto a toric variety $Y$.

Let $R$ be an extremal ray of the Mori cone, $\phi_R:X\to Y$ the 
contraction of $R$, and $E_R\subset X$ its exceptional locus.
Then the restriction $\phi_R |_{E_R}$ makes 
$E_R$ a $\mathbb{P}^d$-bundle over an invariant smooth subvariety $Z\subset Y$.
\end{say}

\begin{rem}\label{rem:Q-Mustata}
Let $X=X_{\Sigma}$ be a smooth projective $n$-dimensional toric variety, and
$C\subset X$ an invariant curve whose class generates an extremal ray of $\NE(X)$.
We have seen above that $-K_X \cdot C \leq \dim(X)+1$.
If equality holds, then \eqref{inequality} implies that $E=F=X$.
Thus $X$ is isomorphic to a projective space.
Hence, if $\dim(X)=n$ and $X\not\cong \p^n$, then $-K_X \cdot C \leq n$
for every invariant curve $C\subset X$ whose class generates an extremal ray of $\NE(X)$.
In particular, if $L$ is an $\R$-divisor on $X$ such that $L\cdot C \geq n$ for every invariant 
curve $C$, then one of the following holds:
\begin{itemize}
	\item $K_X+L$ is nef; or
	\item $X\cong \p^n$ and $[L]= t[H]$,
		where $H$ is a hyperplane and $n\leq t<n+1$.
\end{itemize}
This observation generalizes  \cite[Corollary 4.2]{Mu} to $\R$-divisors.
\end{rem}

\subsection{Fano manifolds with large index} \label{ind}
Let $X$ be a smooth projective variety. 
We say that $X$ is a \emph{Fano manifold} if the anticanonical divisor $-K_X$ is ample.
In this case we define the \emph{index} of $X$ as the largest integer $r$ 
dividing $-K_X$ in $\Pic(X)$.
Fano manifolds with large index are very special.
In \cite{wis},  Wi\'sniewski classified $n$-dimensional Fano manifolds
with index $r \geq \frac{n+1}{2}$. They satisfy one of the following conditions:
\begin{enumerate}
\item $X$ has Picard number one;
\item $X \simeq \mathbb{P}^{r-1} \times \mathbb{P}^{r-1}$;
\item $X \simeq \mathbb{P}^{r-1} \times Q^r$, where $Q^r$ is an $r$-dimensional smooth hyperquadric;
\item $X \simeq \mathbb{P}_{\mathbb{P}^r}(T_{\mathbb{P}^r})$; or
\item $X \simeq \mathbb{P}_{\mathbb{P}^r}(\mathcal{O}(2) \oplus \mathcal{O}(1)^{r-1})$.
\end{enumerate}
Notice that many of those are not toric.
The only smooth projective toric varieties with Picard number one are projective spaces.
The smooth hyperquadric $Q^r$ is not toric if $r>2$.
Finally, if $E$ is a vector bundle over a toric variety $Z$, then $\mathbb{P}_Z(E)$ is toric 
if and only if $E$ is a direct sum of line bundles.
In particular $\mathbb{P}_{\mathbb{P}^r}(T_{\mathbb{P}^r})$ is not toric if $r>1$.

\section{Cayley Polytopes and Toric Fibrations}         
\label{torfib}

In this section we describe the geometry of polarized toric varieties associated to 
generalized Cayley polytopes. We start by fixing the notation to be used throughout this section.

\begin{notation} \label{notation:P_i}
Let $k$ be a positive integer, and
$P_0, \dots, P_k \subset \mathbb{R}^m$  $m$-dimensional lattice polytopes
having the same normal fan $\Sigma$.
Let $Y=X_{\Sigma}$ be the corresponding projective $m$-dimensional toric variety, 
and $D_j$ the ample $T$-invariant divisor on $Y$ associated to $P_j$.
More precisely, write $\Sigma(1)=\big\{\eta_i\big\}_{i\in \{1, \dots, r\}}$, and let $P_j$ 
be given by facet presentation:
$$
P_j\ =\ \Big\{
x\in \R^m \ \Big| \ \langle \eta_i,x\rangle \geq -a_{ij}, \ 1\leq i\leq r\ 
\Big\}.
$$
Let $L_i=V(\eta_i)$ be the $T$-invariant Weil divisor on $Y$ associated to $\eta_i$.
Then $D_j = \displaystyle \sum_{i=1}^r a_{ij}L_i$.
\end{notation}

\begin{say}[Strict Cayley polytopes]\label{fan_cayley}
By \cite[Section 3]{CCD97}, the polarized toric variety associated to the strict Cayley polytope $P_0 *...* P_k$
is 
$$
(X,L) \ \cong \ \Big(\mathbb{P}_Y\big(\mathcal{O}(D_0)\oplus \cdots \oplus \mathcal{O}(D_k) \big), 
\ \xi \Big), 
$$
where $\xi$ is a divisor corresponding to the tautological line bundle.

The fan $\Delta$ of $X$ admits the following explicit description.
Let $\{e_1,...,e_k\}$ be the canonical basis of $\mathbb{R}^k$, and set $e_0 :=-e_1 - \ldots - e_k$. 
We also denote by $e_j$ the vector $(0, e_j) \in \mathbb{R}^{m} \times \mathbb{R}^{k}$.
Similarly, we use the same symbol  $\eta_i$ to denote the vector 
$(\eta_i, 0)\in \mathbb{R}^{m} \times \mathbb{R}^{k}$. 
For each $\eta_i\in \Sigma(1)$, set 
$$
\tilde\eta_i = \eta_i + \displaystyle \sum_{j=0}^k (a_{ij}-a_{i0})e_j\in \Z^{m} \times \Z^{k}.
$$ 
Then $\Delta(1)=\big\{e_0, \cdots, e_k, \tilde\eta_1, \cdots, \tilde\eta_r\big\}$, and
the facet presentation of $P_0 *...* P_k$ is given by:
$$     
	\langle x,\tilde{\eta_i} \rangle \geq -a_{i0}, \ \ \langle x,e_0 \rangle \geq -1, \
	\ \langle x,e_j \rangle \geq 0, \ j=1,...,k.
$$
For each cone $\sigma = \langle \eta_{i_1}, \ldots, \eta_{i_t}\rangle \in \Sigma(m)$, 
set $\tilde{\sigma}= \langle \tilde\eta_{i_1}, \ldots, \tilde\eta_{i_t} \rangle$.
The maximal cones of $\Delta$ are of the form 
$\tilde{\sigma} + \langle e_0, \ldots , \hat{e_j}, \ldots , e_k \rangle$, 
for $\sigma \in \Sigma(m)$ and $j \in \{0,\ldots,k\}$. 

The $\p^k$-bundle map $\pi:X\to Y$ is induced by the projection 
$\mathbb{R}^{m} \times \mathbb{R}^{k}\to \mathbb{R}^{m}$, and
$\pi^*\big(V({\eta_i})\big)=V(\tilde{\eta_i})$.
Thus $L = V(e_0)+ \displaystyle \sum_i a_{i0}V(\tilde{\eta_i})=V(e_0) + \pi^*(D_0)$.
\end{say}

Next we consider the strict generalized Cayley polytope $\Cayley^s(P_0, \ldots ,P_k)$, and
the corresponding projective  toric variety $X$.
In \cite{DDRP09}, it was shown that there exists a toric
fibration $\pi:X\to Y$ whose set theoretical fibers are all isomorphic to $\p^k$.
We note that $\pi$ may have multiple fibers, and  $X$ may be singular, even when $Y$ is smooth.
The following lemma gives a necessary and sufficient condition for $\Cayley^s(P_0, \ldots ,P_k)$
to be smooth.

\begin{lemma} 
The polytope $\Cayley^s(P_0, \ldots ,P_k)$ is smooth if and only if $Y$ is smooth and
$s$ divides $a_{ij}-a_{i0}$ for every $i\in\{1,\cdots,r\}$ and $j\in\{1,\cdots,k\}$. 
In this case, $s$ divides $D_j - D_0$ in $\Div(Y)$
for every $j\in\{1,\cdots,k\}$, and the corresponding polarized toric variety $(X,L)$  satisfies:
\begin{align}
 X \ &\cong \ \mathbb{P}_Y\Big(\mathcal{O}(D_0)\oplus \mathcal{O}\Big(\frac{D_1-D_0}{s}+D_0\Big) 
\oplus \cdots \oplus \mathcal{O}\Big(\frac{D_k-D_0}{s}+D_0\Big) \Big) \ , \notag
\\
 L \ &\sim \ s\xi + \pi^*\big((1-s)D_0\big) \ , \notag
\end{align}
where $\pi: X \to Y$ is the $\p^k$-bundle map, 
and $\xi$ is a divisor corresponding to the tautological line bundle.
\label{le1}
\end{lemma}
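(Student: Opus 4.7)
The plan is to first derive an explicit facet presentation of $\Cayley^s(P_0,\ldots,P_k)$, then read off when its normal fan is smooth, and finally identify the toric variety by matching fans with an ordinary Cayley polytope. Setting $e_0 := -e_1 - \cdots - e_k$, a direct verification from the definition shows that $\Cayley^s(P_0,\ldots,P_k)$ has facet presentation $\langle x, e_j \rangle \geq 0$ for $j=1,\ldots, k$, $\langle x, e_0 \rangle \geq -s$, and $\langle x, \tilde\eta_i \rangle \geq -a_{i0}$ for $i=1,\ldots, r$, where $\tilde\eta_i := \eta_i + \sum_{j=1}^k \frac{a_{ij} - a_{i0}}{s}\, e_j \in \Q^{m+k}$.

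Next I would analyze smoothness one vertex at a time. At a vertex $(v_0, 0)$ incident to facets of $P_0$ with normals $\eta_{i_1}, \ldots, \eta_{i_m}$, the primitive inner normals of the incident facets of $\Cayley^s(P_0,\ldots,P_k)$ are $e_1, \ldots, e_k$ together with $\rho_{i_l} := s\tilde\eta_{i_l}/g_{i_l}$, where $g_{i_l} := \gcd\bigl(s,\, a_{i_l, 1}-a_{i_l,0},\, \ldots,\, a_{i_l, k}-a_{i_l,0}\bigr)$. The resulting $n \times n$ matrix is block triangular with $I_k$ in the bottom-right and top-left block $\bigl((s/g_{i_1})\eta_{i_1}\,\big|\,\cdots\,\big|\,(s/g_{i_m})\eta_{i_m}\bigr)$, so its determinant equals $\pm 1$ if and only if $g_{i_l}=s$ for every $l$ and the $\eta_{i_l}$'s form a $\Z$-basis of $\Z^m$. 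Running the analogous computation at each vertex of $P_j \times \{se_j\}$ for $j \geq 1$, and using that every $\eta_i$ appears in some maximal cone of $\Sigma$, I would conclude that $\Cayley^s(P_0,\ldots,P_k)$ is smooth precisely when $Y$ is smooth and $s \mid (a_{ij}-a_{i0})$ for every $i$ and $j$.

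Assuming these conditions, $\tilde D_j := (D_j - D_0)/s + D_0$ is an integral $T$-invariant divisor on $Y$ whose associated polytope $Q_j$ shares the normal fan $\Sigma$ with $P_j$. A direct substitution shows that the vectors $\tilde\eta_i$ above coincide with the corresponding vectors produced by applying the construction of Paragraph \ref{fan_cayley} to the ordinary Cayley polytope $Q_0 * \cdots * Q_k$. Since the combinatorial type of both Cayley-type polytopes is controlled in the same way by $\Sigma$, their normal fans agree, yielding $X \cong \p_Y\bigl(\mathcal{O}(\tilde D_0) \oplus \cdots \oplus \mathcal{O}(\tilde D_k)\bigr)$. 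Finally, the facet presentation above gives $L = sV(e_0) + \sum_i a_{i0}\,V(\tilde\eta_i)$, while Paragraph \ref{fan_cayley} applied to the $Q_j$'s gives $\xi = V(e_0) + \pi^*D_0 = V(e_0) + \sum_i a_{i0}\,V(\tilde\eta_i)$; subtracting yields $L \sim s\xi + \pi^*((1-s)D_0)$.

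The main obstacle I expect is the primitive-ray bookkeeping in the smoothness step: when $s \nmid (a_{ij}-a_{i0})$ the vector $\tilde\eta_i$ is not a lattice vector, and the correct primitive generator of the ray $\R_{\geq 0}\tilde\eta_i$ must be extracted carefully so that the resulting determinant reflects exactly the claimed divisibility condition, rather than some weaker one (for instance coming from cancellation in the gcd factor $g_{i_l}$ at only some vertices).
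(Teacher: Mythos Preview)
Your proposal is correct and follows essentially the same route as the paper. Both arguments derive the facet presentation of $\Cayley^s(P_0,\ldots,P_k)$ via the rational vectors $\hat\eta_i=\eta_i+\sum_j\frac{a_{ij}-a_{i0}}{s}e_j$, compute the determinant of the primitive generators at each maximal cone/vertex as a block-triangular product, and then identify the normal fan with that of the ordinary Cayley polytope $Q_0*\cdots*Q_k$ for $Q_j=P_{(D_j-D_0)/s+D_0}$; the only cosmetic difference is that you track the primitive generator through $g_i=\gcd(s,a_{i1}-a_{i0},\ldots,a_{ik}-a_{i0})$, whereas the paper uses the least positive integer $r_i$ with $r_i\hat\eta_i\in\Z^{m+k}$, and these are related by $r_i=s/g_i$, so the worry you flag about primitive-ray bookkeeping is already handled correctly by your gcd computation.
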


\begin{proof} 
Set $P^1:=P_0 *...* P_k$, and $P^s := \Cayley^s(P_0, \ldots ,P_k)$. 
Notice that a point $x =(y,z)\in\mathbb{R}^m\times\mathbb{R}^k$ lies in $P^s$ if and only if 
$(y,\frac{z}{s})$ lies in $P^1$. Hence, from the facet description of $P^1$ given in paragraph \ref{fan_cayley},
we deduce that $P^s$ has the following facet presentation:
$$
\langle x,\hat{\eta_i}\rangle \geq-a_{i0}, \ \ \ \ \langle x,e_0\rangle\geq -s, \ \ \ \ 
\langle x,e_j\rangle\geq 0 \ \ \ \ j=1,...,k,
$$
where  $\displaystyle \hat{\eta_i}=\eta_i+\sum_{j=1}^k\frac{(a_{ij}-a_{i0})}{s} e_j$.

Suppose that $Y$ is smooth and
$s$ divides $a_{ij}-a_{i0}$ for every $i\in\{1,\cdots,r\}$ and $j\in\{1,\cdots,k\}$.
Then $\hat{\eta_i}$ is a primitive lattice vector, 
$s$ divides $D_j-D_0$ in $\Div(Y)$, and one can check that $P^s$ and 
$P_{D_0}*P_{\left(\frac{D_1 - D_0}{s}+D_0\right)}*\cdots *P_{\left(\frac{D_k - D_0}{s}+D_0 \right)}$
have the same normal fan. Thus
$$
X \ \cong \ \mathbb{P}_Y\Big(\mathcal{O}(D_0)\oplus \mathcal{O}\Big(\frac{D_1-D_0}{s}+D_0\Big) 
\oplus \cdots \oplus \mathcal{O}\Big(\frac{D_k-D_0}{s}+D_0\Big) \Big),
$$
and $P^s$ is smooth. 
The facet presentations of $P^s$ and 
$P_{D_0}*P_{\left(\frac{D_1 - D_0}{s}+D_0\right)}*\cdots *P_{\left(\frac{D_k - D_0}{s}+D_0 \right)}$
also show that 
$$
L \ = \ sV(e_0) + \displaystyle \sum a_{i0}V(\hat{\eta_i}) \
                  = \ sV(e_0) + \pi^*(D_0) \
                  \sim s\xi + \pi^*((1-s)D_0).
$$

Conversely, suppose that $P^s$ is smooth, and denote by $\Delta$ its normal fan.
For each $i\in\{1,\cdots,r\}$, let $r_i$ be the least 
positive (integer) number  such that $r_i\hat{\eta}_i$ is a lattice vector.
It follows from paragraph \ref{fan_cayley} that the maximal cones of $\Delta$ are of the form:
$$
\langle r_{i_1}\hat{\eta}_{i_1}, \ldots, r_{i_t}\hat{\eta}_{i_t}, e_0, \ldots \hat{e}_j, \ldots e_k\rangle,  
$$
where $\langle \eta_{i_1}, \ldots, \eta_{i_t}\rangle \in \Sigma(m)$
and $j\in\{0,\cdots,k\}$. 
Since $X$ is smooth, $\Sigma$ must be simplicial (i.e., $t=m$), and
$$ 
1\ = \ \big|det[r_{i_1}\hat{\eta}_{i_1}, \ldots , r_{i_m}\hat{\eta}_{i_m}, e_0, \ldots \hat{e}_j, \ldots , e_k]\big| 
\ = \ \big|r_{i_1} \ldots r_{i_m}\big|\cdot \big|det[\eta_{i_1}, \ldots , \eta_{i_m}]\big|.
$$
It follows that $\big|det[\eta_{i_1}, \ldots , \eta_{i_m}]\big|=1$ for every 
$\langle \eta_{i_1}, \ldots, \eta_{i_m}\rangle \in \Sigma(m)$, 
and $r_i =1$ for every $i\in\{1,\cdots,r\}$.
Thus $Y$ is smooth, and
$s$ divides $a_{ij}-a_{i0}$ for every $i\in\{1,\cdots,r\}$ and $j\in\{1,\cdots,k\}$.
\end{proof}

Next we give a sufficient condition for a generalized Cayley polytope to be $\mathbb{Q}$-normal,
improving the criterion given in \cite[Proposition 3.9]{DDRP09}.

\begin{prop}\label{prop1}
Suppose that $P^s:=\Cayley^s(P_0, \ldots ,P_k)$ is smooth, and $\frac{k+1}{s}\geq m$.
Then one of the following conditions holds:
\begin{enumerate}
\item $P^s$ is $\mathbb{Q}$-normal and $\codeg_{\mathbb{Q}}(P^s) = \frac{k+1}{s}$.
\item $Y\cong \p^m$, $P_i=d_i\Delta_m$ for positive integers $d_i$'s such that $s\big| (d_i-d_0)$ \ for every $i$, and 
	$sm\leq \sum d_i <s(m+1)$.
	Up to renumbering, we may assume that $d_0\leq d_1\leq \cdots \leq d_k$. There are 2 cases:
	\begin{enumerate}
		\item If $d_0=d_k$, then $P^s\cong d_0\Delta_m\times s\Delta_k$ is 
			$\mathbb{Q}$-normal and $\codeg_{\mathbb{Q}}(P^s) = \frac{m+1}{d_0}$.
		\item If $d_0<d_k$, then $P^s$ is not $\mathbb{Q}$-normal, 
		$$
		\tau(P^s)= \frac{k+1}{s} + \frac{m+1-\frac{\sum d_i}{s}}{d_0} \  \ \ \ \text{and} \ \ \ \ \codeg_{\mathbb{Q}}(P^s) = \frac{k+1}{s}+ \frac{m+1-\frac{\sum d_i}{s}}{d_k}. 
		$$
	\end{enumerate}
\end{enumerate}
\end{prop}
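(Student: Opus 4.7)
The plan is to transfer the question to the polarized toric variety $(X,L)$ associated to $P^s$. By Lemma~\ref{le1}, $X = \mathbb{P}_Y(\mathcal{E})$ with $\mathcal{E} = \bigoplus_{j=0}^k \mathcal{O}(E_j)$, where $E_0 = D_0$ and $E_j = \frac{D_j-D_0}{s} + D_0$ for $j \geq 1$, and $L \sim s\xi + \pi^*\bigl((1-s)D_0\bigr)$. Starting from $K_X = -(k+1)\xi + \pi^*\bigl(K_Y + c_1(\mathcal{E})\bigr)$ and substituting $\xi = (L + (s-1)\pi^*D_0)/s$, a direct computation yields the key identity
\[
K_X + tL \;=\; \Bigl(t - \tfrac{k+1}{s}\Bigr)\,L \;+\; \pi^* M,
\qquad
M := K_Y + \tfrac{1}{s}\sum_{j=0}^k D_j.
\]
Set $t_0 := (k+1)/s$. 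Restricting $K_X + tL$ to a fiber $F \cong \p^k$ gives $(ts-(k+1))H$, providing the universal lower bound $\tau(P^s),\,\codeg_\Q(P^s) \geq t_0$.

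The argument splits on whether $M$ is nef on $Y$. If $M$ is nef, then $K_X + t_0 L = \pi^* M$ is nef and (since $Y$ is projective) pseudo-effective, so $\tau(P^s) \leq t_0$ and $\codeg_\Q(P^s) \leq t_0$. Combined with the fiber bound, this gives $\tau(P^s) = \codeg_\Q(P^s) = (k+1)/s$, which is conclusion~(1). If $M$ is not nef, then for every invariant extremal curve $C \subset Y$ we have $\tfrac{1}{s}\sum_j D_j \cdot C \geq (k+1)/s \geq m$, because each ample Cartier $D_j$ satisfies $D_j \cdot C \geq 1$. Applied to $N := \tfrac{1}{s}\sum_j D_j$ on the $m$-dimensional smooth toric variety $Y$, Remark~\ref{rem:Q-Mustata} forces $Y \cong \p^m$ with $N \sim tH$ for some $m \leq t < m+1$. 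Each $P_j$ is then a dilate $d_j \Delta_m$ and $sm \leq \sum d_j < s(m+1)$, placing us in case~(2).

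In subcase~(2a) all $d_j = d_0$, so $\mathcal{E} \cong \mathcal{O}(d_0)^{k+1}$; thus $X \cong \p^m \times \p^k$ and, rewriting $L$ in the two hyperplane classes, $P^s$ is affinely isomorphic to $d_0\Delta_m \times s\Delta_k$. The product formula $(P \times Q)^{(t)} = P^{(t)} \times Q^{(t)}$, together with $(k+1)d_0 < s(m+1)$, yields $\tau(P^s) = \codeg_\Q(P^s) = (m+1)/d_0$. In subcase~(2b) we have $d_0 < d_k$; I normalize by passing to $\mathcal{E}' := \mathcal{E}(-D_0) \cong \mathcal{O} \oplus \mathcal{O}(f_1) \oplus \cdots \oplus \mathcal{O}(f_k)$ with $f_j := (d_j - d_0)/s$. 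The variety $X$ has Picard rank two, and I identify the extremal rays of both cones explicitly: $\NE(X)$ is generated by the fiber class $\ell$ of $\pi$ and a line $\ell_0$ in the minimal section $\Sigma_0 \cong \p^m$ coming from the $\mathcal{O}$-summand, while $\eff(X)$ is generated by $H_1 = \pi^*H$ and the class of the invariant divisor $V(e_k) \sim \xi' - f_k H_1$ corresponding to the ``maximal'' summand $\mathcal{O}(f_k)$. In the basis $\{\xi', H_1\}$ of $N^1(X)$, nefness of $K_X + tL$ reduces to $(K_X+tL)\cdot \ell_0 \geq 0$ (the condition on $\ell$ is weaker), and pseudo-effectivity reduces to writing $K_X + tL$ as a non-negative combination of $H_1$ and $\xi' - f_k H_1$. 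Both are elementary linear conditions and unwind to the asserted
\[
\tau(P^s) = \frac{k+1}{s} + \frac{m+1 - \sum d_j/s}{d_0}, \qquad \codeg_\Q(P^s) = \frac{k+1}{s} + \frac{m+1 - \sum d_j/s}{d_k};
\]
since $d_0 < d_k$ and $m+1 - \sum d_j/s > 0$, we obtain $\tau(P^s) > \codeg_\Q(P^s)$, so $P^s$ fails to be $\Q$-normal.

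The main obstacle is subcase~(2b): correctly identifying the two extremal rays of $\NE(X)$ and $\eff(X)$ in this Picard-rank-two toric variety, and tracking the generator change $\xi \mapsto \xi' = \xi - d_0 H_1$ induced by the twist $\mathcal{E} \mapsto \mathcal{E}(-D_0)$ (under which $[V(e_j)]$ moves to $\xi' - f_j H_1$, so that the extremal pseudo-effective divisor is indexed by $d_k$ rather than $d_0$). The remaining steps --- the nef-implies-pseudoeffective implication in case~(1), the application of Remark~\ref{rem:Q-Mustata} yielding $Y \cong \p^m$, and the product-of-simplices identification in subcase~(2a) --- package ingredients already prepared in Section~2.
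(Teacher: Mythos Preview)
Your proof is correct and follows essentially the same route as the paper: both derive the identity $K_X + tL = (t-\tfrac{k+1}{s})L + \pi^*M$ with $M = K_Y + \tfrac{1}{s}\sum D_j$, invoke Remark~\ref{rem:Q-Mustata} to split into the nef case versus $Y\cong\p^m$, and in the latter compute $\tau$ and $\codeg_\Q$ from the explicit description of $\nef(X)$ and $\eff(X)$ on the Picard-rank-two bundle over $\p^m$. The only cosmetic difference is that the paper obtains the lower bound $\codeg_\Q(P^s)\geq\tfrac{k+1}{s}$ by projecting $P^s$ onto $s\Delta_k$ combinatorially, whereas you restrict $K_X+tL$ to a fiber $F\cong\p^k$; these are the polytope and variety sides of the same observation.
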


\begin{proof}
The second projection $f:\mathbb{R}^m \times \mathbb{R}^k \to \mathbb{R}^k$ maps $P^s$ 
onto the $k$-dimensional simplex $s\Delta_k$. 
Therefore, $ \tau(P^s)\geq \codeg_{\mathbb{Q}}(P^s) \geq \codeg_{\mathbb{Q}}(\Delta_k)=\frac{k+1}{s}$.

Recall from \ref{fan_cayley} that
the polarized toric variety associated to $P^s$ is 
$$
(X,L) \ \cong \ \Big(\mathbb{P}_Y\big(\mathcal{O}(E_0)\oplus \cdots \oplus \mathcal{O}(E_k) \big), 
\ s\xi + \pi^*\big((1-s)D_0\big) \Big),
$$
where $\pi: X \to Y$ is the $\p^k$-bundle map, 
$\xi$ is a divisor corresponding to the tautological line bundle, and 
$E_i = \frac{D_i + (s-1)D_0}{s}\in \Div(X)$ for  $i\in\{0,...,k\}$. 
We have $K_X \sim \pi^*(K_Y+E_0+\ldots+E_k) -(k+1)\xi$. 
Since the $D_i$'s are ample, the $\Q$-divisor
$$
M \ := \ \displaystyle \sum_{i=0}^{k} E_i - \frac{(k+1)(s-1)}{s}D_0 \ = \ \frac{1}{s} \sum_{i=0}^{k} D_i 
$$
satisfies  $M \cdot C \geq \frac{k+1}{s} \geq m$ for every invariant curve 
$C \subset Y$. 
By Remark~\ref{rem:Q-Mustata}, either $K_Y + M$ is nef,
or $Y\cong \p^m$ and $sm\leq \sum_{i=0}^{k} d_i<s(m+1)$, where $d_i$ denotes the degree 
of the ample divisor $D_i$ under the isomorphism $Y\cong \p^m$.

Suppose $K_Y + M$ is nef. Then $\pi^*(K_Y + M)$ is nef but not ample on $X$. 
Since $L$ is ample, it follows that 
$$
K_X + tL \ \sim \ \pi^*(K_Y +M) + \Big(t -\frac{k+1}{s} \Big)L
$$
is ample if and only if $t>\frac{k+1}{s}$. Hence $\tau(P^s)=\frac{k+1}{s}$, as desired.

Suppose now $Y\cong \p^m$ and $sm\leq \sum_{i=0}^{k} d_i<s(m+1)$, and assume that $d_0\leq d_1\leq \cdots \leq d_k$.
Then $X\cong \mathbb{P}_{\p^m}\big(\mathcal{O}(a_0)\oplus \cdots \oplus \mathcal{O}(a_k)\big)$, where $a_i=d_0+\frac{d_i-d_0}{s}\in \Z$.
Using the notation of paragraph \ref{fan_cayley}, one can check that 
$$
\nef(X)=\ cone\big([V(e_0)], [\pi^*H]\big) \ \ \ \ \ and \ \ \ \ \ \eff(X)=\ cone\big([V(e_k)], [\pi^*H]\big),
$$
where $H$ is a hyperplane in $\p^m$. An easy computation then shows that 
$$
\tau(P^s)= \lambda(X,L)^{-1} = \frac{k+1}{s} + \frac{m+1-\frac{\sum d_i}{s}}{d_0} \   \ \text{and} \ \ 
\codeg_{\mathbb{Q}}(P^s) =  \sigma(X,L)^{-1} = \frac{k+1}{s}+ \frac{m+1-\frac{\sum d_i}{s}}{d_k}. 
$$
We leave the details to the reader.
\end{proof} 

We now give an example of a (non $\mathbb{Q}$-normal) smooth lattice polytope
$P$ that satisfies $\codeg(P) = \frac{\dim(P)+1}{2}$ but $P$ is not a generalized strict Cayley polytope.  

\begin{example}
Let $m$ be a positive integer, and  $H\subset \mathbb{P}^m$ a hyperplane.
Let $\pi:X\to \mathbb{P}^m \times \mathbb{P}^1$ be 
the
blowup of $\mathbb{P}^m \times \mathbb{P}^1$ along $H_o:=H\times \{o\}$. 
Then $X$ is a smooth projective toric variety with Picard number $3$. 
We will see below that $X$ is Fano, and the Mori cone $\NE(X)$ has exacly $3$
extremal rays, whose corresponding contractions are all divisorial contractions.
Since $X$ does not admit any contraction of fiber type,
$P_L$ is not a generalized strict Cayley polytope 
for any ample divisor $L$ on $X$ by Lemma~\ref{le1}. 
When $m$ is even, we will then exhibit an ample divisor $L$ on $X$ such that 
 $\codeg(P_L)=\frac{\dim(P_L)+1}{2}$.

Let $\{e_1, \ldots, e_m, e\}$ be the canonical basis of $\mathbb{R}^m \times \mathbb{R}$. 
The maximal cones of the fan $\Sigma$  of $\mathbb{P}^m \times \mathbb{P}^1$ are of the form
$\langle e_0, \ldots , \hat{e_i}, \ldots e_m, \pm e \rangle$, $i=0, \ldots, m $. 
Set $f:=e_1+e$. 
The fan $\Sigma_X$  of $X$ is obtained from $\Sigma$ by star subdivision centered in $f$. 
Set $D_i := V(e_i), \ i=0, \ldots ,m, \ D_e:=V(e)$ and $E:=V(f)$. 
One can check that
$D_i \sim D_0$ for $i>1$, $V(-e) \sim D_e + E$, and $D_0 \sim D_1 + E$. 
Therefore the classes of $D_1$, $D_e$ and $E$ form a basis for $N^1(X)$
and generate the cone $\eff(X)$. 
Written in this basis, 
$$
-K_X = \sum_{i=0}^{m} D_i + D_e + V(-e) + E \sim (m+1)D_1 +2D_e + (m+2)E.
$$
The cone $\NE(X)$ is generated by the classes of the invariant curves 
$C_1$, $C_2$ and $C_3$ associated to the cones 
$\langle e_1,e_2, \ldots e_m \rangle$, 
$\langle e_2, \ldots e_m, f \rangle$ and 
$\langle e_2, \ldots e_m, e \rangle$, respectively. 
For each $i\in \{1,2,3\}$, 
denote by $\pi_i$ the contraction of the extremal ray generated by $[C_i]$. 
Then $\pi_1:X\to \mathbb{P}_{\mathbb{P}^m}(\mathcal{O}(1) \oplus \mathcal{O})$
blows down the divisor $D_1$ onto a $\p^{m-1}$, $\pi_2=\pi$, 
and $\pi_3:X\to \mathbb{P}_{\mathbb{P}^1}(\mathcal{O}(1)\oplus \mathcal{O}^{\oplus m})$
blows down the divisor $D_e$ onto a point.

In terms of the basis for $N^1(X)$ and $N_1(X)$ given above, the 
intersection product between divisors and curves is given by:
\[D_1 \cdot C_1 = -1, D_1 \cdot C_2 = 1 , D_1 \cdot C_3 = 0 \]
\[D_e \cdot C_1 = 0, D_e \cdot C_2 = 1, D_e \cdot C_3 = -1 \]
\[E \cdot C_1 = 1, E \cdot C_2 = -1 , E \cdot C_3 = 1. \]
By Kleiman's Ampleness Criterion, a divisor $D = aD_1 + bD_e + cE$ is ample if and only if 
$-a+c > 0$, $a+b-c >0$ and $-b+c>0$.
Thus $L = 2D_1 + 2D_e + 3E$ is ample, and  $K_X+tL$ is ample  if and only if $t>m$.
Hence $\tau(L) = m$.
Since $\eff(X) = \Cone\big([D_1],[D_e],[E]\big)$, $K_X + tL\in \eff(X)$ if and only if $t \geq \frac{m+1}{2}$. 
Thus $\codeg_{\mathbb{Q}}(P_L)=\frac{m+1}{2}$.
When $m$ is even, 
$\codeg(P_L)=\lceil \codeg_{\mathbb{Q}}(P_L) \rceil = \frac{m+2}{2} =\frac{\dim(P_L)+1}{2}$.
\label{contra}
\end{example}

\begin{rem}
In \cite{ito}, Ito characterized (not necessarily strict) Cayley polytopes of the form $P_0 *...* P_k$.
They are the lattice polytopes whose corresponding polarized toric varieties are covered by $k$-planes.

In \cite{tese_edilaine}, Nobili investigated a further generalization of strict Cayley polytopes,
called \emph{Cayley-Mori polytopes}. These are polytopes of the form
$
\Conv \big(P_0 \times \{{0}\}, P_1 \times \{w_1\},...,P_k \times \{w_k\}\big) \subset \mathbb{R}^{m}\times \mathbb{R}^{k}
$,
where $P_0, \dots, P_k \subset \mathbb{R}^m$  are $m$-dimensional lattice polytopes
with the same normal fan, and $w_1,...,w_k$ are lattice vectors that form a basis for $\mathbb{R}^k$.
They are special cases of \emph{twisted Cayley sums}, introduced by Casagrande and Di Rocco in \cite{CDR},
and are precisely the lattice polytopes whose corresponding toric varieties are Mori fiber spaces.
\end{rem}

%
%

\section{Proof of Theorem \ref{mainthm} }

First note that the five classes of polytopes listed in Theorem \ref{mainthm} are $\mathbb{Q}$-normal 
and have codegree $\geq \frac{n+1}{2}$. This is straightforward for types (i), (ii) and (iii). 
For types (iv) and (v), this follows from Proposition \ref{prop1}.

Conversely, suppose that $P \subset \mathbb{R}^n$ is a smooth $n$-dimensional $\mathbb{Q}$-normal 
lattice polytope with $\codeg(P) \geq \frac{n+1}{2}$, and denote by $(X,L)$  the corresponding 
polarized toric variety. We may assume that $n>1$.
By Remark~\ref{rem:Q-codegxcodeg}, $\tau:=\tau(P) > \frac{n-1}{2}$.
Recall from Section~\ref{subsec_adj} that $\tau=\lambda(X,L)^{-1}$.
It follows from the discussion in Section~\ref{mori} that 
the nef value morphism $\phi=\phi_L: X \to Y$ is defined by the linear system 
$\big|k(K_X+\tau L)\big|$ for $k$ sufficiently large and divisible. 
Moreover, the assumption that $P$ is $\mathbb{Q}$-normal implies that $\dim(Y)<\dim(X)$.
If $C\subset X$ is an extremal curve contracted by $\phi$, then 
\begin{equation}
n+1 \geq \mathfrak{l}\big(\R_+[C]\big) = -K_X \cdot C = \tau ( L \cdot C) > \frac{n-1}{2} L \cdot C.
\label{cone}
\end{equation}
In particular, $L \cdot C \leq 5$. We consider three cases:

\vspace{0.2cm}

{\it Case 1.} Suppose that $L \cdot C = 1$ for every extremal curve $C\subset X$ contracted by $\phi$. 
Then $\tau = \codeg_{\mathbb{Q}}(P) = -K_X \cdot C \in \Z$, and thus 
$\tau =  \codeg(P) \geq \frac{n+1}{2}$.

If $\dim(Y) = 0$, then $-K_X \sim \tau L$ is ample, i.e, $X$ is a Fano manifold with index
$r \geq \tau \geq \frac{n+1}{2}$. The classification in Section~\ref{ind} implies that 
$X$ is isomorphic to one of the following: 
$\mathbb{P}^n$, $\mathbb{P}^{\frac{n}{2}} \times \mathbb{P}^{\frac{n}{2}}$ ($n$ even), 
$\mathbb{P}^1 \times \mathbb{P}^1 \times \mathbb{P}^1$ ($n=3$),  or $\mathbb{P}_{\mathbb{P}^r}({\mathcal{O}(2) \oplus \mathcal{O}(1)^{r-1}})$ ($n=2r-1$). 
In the first three cases we have 
$P \simeq \Delta_n$, $P \simeq \Delta_{\frac{n}{2}} \times \Delta_{\frac{n}{2}}$, and
$P \simeq \Delta_1 \times \Delta_1 \times \Delta_1$ respectively. 
These are strict Cayley polytopes as in (iv).
In the last case, let $\pi:X \to \mathbb{P}^r$ be the $\mathbb{P}^{r-1}$-bundle map, and
$\xi$ a divisor on $X$ corresponding to the tautological line bundle.
One computes that  $-K_X \sim  r\xi$, and thus $L \sim \xi$.
It follows from paragraph~\ref{fan_cayley} that
$P \simeq \Cayley^1(\underbrace{\Delta_r, \ldots, \Delta_r}_{r-1 \ times}, 2\Delta_r)$.

Suppose now that $\dim(Y)>0$, and denote by $X_y$ the general fiber of $\phi$.
By Theorem~\ref{thm:BSW}, applied in the toric context, there exists 
an extremal ray $R$ of $\overline{NE}(X)$ whose associated contraction 
$\phi_R: X \to Z$ factors $\phi$ and realizes $X$ as the projectivization of a vector bundle 
$\mathcal{E}$ of rank $\tau$ over a smooth toric variety $Z=X_{\Sigma}$. 
Set $k:=\tau -1$, 
let $F \simeq \mathbb{P}^k$ be a general fiber of $\phi_R$, and $C\subset F$ a line.
Since $L \cdot C = 1$, we have $\mathcal{O}_X(L)|_F \simeq \mathcal{O}_{\mathbb{P}^k}(1)$. 
By Fujita's Lemma (see for instance \cite[3.2.1]{BS95}), 
$X \simeq_{Z} \mathbb{P}_Z({\phi_R}_*\mathcal{O}_X(L))$, and 
under this isomorphism $\mathcal{O}_X(L)$ corresponds to the tautological line bundle.
Since $X$ is toric, the ample vector bundle ${\phi_R}_{\ast}\mathcal{O}_X(L)$ splits as a sum of $k+1$ 
ample line bundles on $Z$. 
It follows from paragraph~\ref{fan_cayley} that
there are polytopes $P_0, \ldots, P_k$ with normal fan $\Sigma$ 
such that $P \simeq \Cayley^1(P_0, \ldots, P_k)$. 
Note moreover that $k =\tau -1 \geq \frac{n-1}{2}$. 
     
\vspace{0.2cm}

{\it Case 2.} Suppose that there is an extremal curve $C\subset X$ contracted by $\phi$ such that 
$L \cdot C = 2$. 
Let $R$ be the extremal ray generated by $C$, and $\phi_R: X \to Z$ the associated contraction.
By \eqref{cone}, $\mathfrak{l}(R)=-K_X \cdot C \in\{n,n+1\}$. 
Let $E$ be the excepcional locus of $\phi_R$, and $F$ an irreducible component of a fiber of the restriction $\phi_R|_E$. 
By \eqref{inequality}, $\dim(E) = n$ and $n-1\leq \dim(F) \leq n$. 

If $\dim(F)=n$, then $(X, \mathcal{O}_X(L)) \simeq (\mathbb{P}^n,\mathcal{O}(2))$, 
and $P \simeq 2\Delta_n$. 

If $\dim(F)=n-1$, then  $\phi_R: X \to \p^1$ is a $\p^{n-1}$-bundle, as explained in Section \ref{mori},
and $\mathcal{O}_X(L)|_F\cong \mathcal{O}_{\p^{n-1}}(2)$.
So there are integers $0<a_0 \leq \cdots \leq a_{n-1}$ and $a>-2a_0$ such that 
\begin{align}
 X \ &\cong \ \mathbb{P}_{\p^1}\big(\mathcal{O}(a_0) \oplus \cdots \oplus \mathcal{O}(a_{n-1})\big) \ , \notag
\\
 L \ &\sim \ 2\xi + a F \ , \notag
\end{align}
where $\xi$ a divisor corresponding to the tautological line bundle.
By Lemma~\ref{le1},
$$         
P \cong \Cayley^2 \big((2a_0 -a)\Delta_1, \ldots, (2a_{n-1} -a)\Delta_1\big).
$$ 

\vspace{0.2 cm}

{\it Case 3.} Suppose that there is an extremal curve $C\subset X$ contracted by $\phi$ such that  
$3 \leq L \cdot C \leq 5$. 
By \eqref{cone}, we must have $n\leq 4$.
If $3\leq n\leq 4$, then \eqref{inequality} and \eqref{cone} imply that $L\cdot C=3$ and $X\cong \p^n$.
Thus $P\cong 3\Delta_n$. For $n\in \{3,4\}$, $\codeg(3\Delta_n) = 2$. This  is $\geq  \frac{n+1}{2}$
only if $n=3$.

From now on suppose $n=2$. If $L\cdot C\in\{4,5\}$, then \eqref{inequality}  implies that 
$-K_X\cdot C=3$, and thus $X\cong \p^2$.
On the other hand, $4\Delta_2$ and $5\Delta_2$ do not satisfy the codegree hypothesis. 
So we must have $L\cdot C=3$.
We conclude from  \eqref{inequality} and \eqref{cone} that 
there are integers $0<a_0 \leq a_1$ and $a>-3a_0$ such that 
\begin{align}
 X \ &\cong \ \mathbb{P}_{\p^1}\big(\mathcal{O}(a_0) \oplus \mathcal{O}(a_{1})\big) \ , \notag
\\
 L \ &\sim \ 3\xi + a F \ , \notag
\end{align}
where $\xi$ a divisor corresponding to the tautological line bundle, and $F$ is a fiber of $X\to \p^1$.
By Lemma~\ref{le1},
$$         
P \cong \Cayley^3 \big((3a_0 -a)\Delta_1, (3a_{1} -a)\Delta_1\big).
$$ 
On the other hand, the latter has codegree
$=1<\frac{n+1}{2}$.
So this case does not occur.
\hfill $\square$

\end{document}